\documentclass[11pt,reqno,a4paper]{amsart}

\usepackage[T1]{fontenc}
\usepackage{amsmath,amsthm,amssymb}
\usepackage{microtype,fbb}
\usepackage{xcolor}
\usepackage[top=2.4cm,bottom=2.4cm,left=2.4cm,right=2.4cm,headsep=0.2in]{geometry}
\usepackage{fancyhdr}
\usepackage{hyperref}
\hypersetup{
  colorlinks=true,
  linkcolor=blue!50!green,
  citecolor=blue!50!green,
  urlcolor=blue!50!green,
  pdfauthor={Mayukh Mukherjee},
  pdftitle={A sharp covering theorem and Solyanik estimates for Euclidean balls}
}
\newtheorem{theorem}{Theorem}[section]
\newtheorem{maintheorem}{Theorem}

\newtheorem{lemma}[theorem]{Lemma}
\newtheorem{proposition}[theorem]{Proposition}
\newtheorem{corollary}[theorem]{Corollary}
\theoremstyle{remark}

\numberwithin{equation}{section}
\allowdisplaybreaks[1]
\title[A sharp covering theorem and Solyanik estimates]{A sharp covering theorem and Solyanik estimates for Euclidean balls}
\date{}
\keywords{Covering lemmas, Hardy--Littlewood maximal operator, Solyanik estimates}
\author{Mayukh Mukherjee}

\address{Department of Mathematics, Indian Institute of Technology Bombay,
Powai, Mumbai 400076, India}
\email{mukherjee@math.iitb.ac.in, mathmukherjee@gmail.com}

\begin{document}
\begin{abstract}
For every finite family of Euclidean balls in $\mathbb{R}^n$, $n\ge2$, and every $0<\delta<1/2$, we select a subfamily whose $(1+\delta)$-dilations cover the original union and whose undilated balls have multiplicity at most $C_n\delta^{-(n-1)/2}$. This proves the covering estimate conjectured by Han and Lu \cite{HL}. As an application, we determine the optimal Solyanik asymptotic
\[
\mathcal C_n(\alpha)-1\asymp_n(1-\alpha)^{2/(n+1)}
\qquad(\alpha\uparrow1)
\]
for the uncentered Hardy--Littlewood maximal operator over Euclidean balls, which is Conjecture~1(b) of Hagelstein and Parissis \cite{HP14}. For the modified uncentered maximal operators, we determine the optimal weak $(1,1)$ growth rate $(k-1)^{-(n-1)/2}$ as $k\downarrow1$, uniformly over Radon measures. We also obtain the corresponding $L^p$ and Fefferman--Stein bounds.
\end{abstract}

\maketitle

\section{Introduction}

Throughout the paper, $n\ge2$, all balls are open and have positive radii, and $aB(c,r)=B(c,ar)$ for $a>0$. Set $\sigma=(n-1)/2$ and $\rho=2/(n+1)=1/(\sigma+1)$. Constants denoted by $c_n$, $C_n$, or $A_n$ depend only on the dimension.

Chanillo and Muckenhoupt \cite[Lemma 3]{CM} proved that, for every finite family of balls in $\mathbb{R}^n$, one can select a subfamily whose $(1+\delta)$-dilations cover the original union and whose undilated balls have multiplicity at most $4^n\delta^{-n}$. In dimension two, Lu \cite[Lemma~1]{Lu91} improved the multiplicity bound to $C\delta^{-7/4}\log(1/\delta)$; he subsequently obtained the bound $C_n\delta^{-n+1/4}\log(1/\delta)$ in $\mathbb{R}^n$, $n\ge2$, in \cite[Lemma~C]{Lu93}. Sawano \cite[Theorem~1.5]{Saw} proved that, for each $k>1$, a family of Euclidean balls with uniformly bounded radii has a subfamily whose $k$-dilates cover the original union and which is the union of $N(n,k)$ families of pairwise disjoint balls. Han and Lu \cite[Theorem~1.5]{HL} later obtained $C_n\delta^{-n/2}\log(1/\delta)$ and conjectured that the optimal bound is $C_n\delta^{-(n-1)/2}$ \cite[Conjecture 1.6]{HL}. The following theorem proves that.

\begin{maintheorem}\label{thm:covering}
Let $0<\delta<1/2$. Every finite family $\mathcal{B}$ of balls in $\mathbb{R}^n$ has a subfamily $\mathcal{S}\subset\mathcal{B}$ such that
\begin{equation}\label{eq:covering}
\bigcup_{B\in\mathcal{B}}B\subset\bigcup_{B\in\mathcal{S}}(1+\delta)B
\end{equation}
and
\begin{equation}\label{eq:multiplicity}
\sum_{B\in\mathcal{S}}\mathbf{1}_B(x)\le A_n\delta^{-\sigma}\qquad(x\in\mathbb{R}^n).
\end{equation}
The exponent $\sigma=(n-1)/2$ is optimal, even for families of equal-radius balls.
\end{maintheorem}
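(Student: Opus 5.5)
The plan is a greedy selection in order of decreasing radius, followed by a counting argument based on geometric "caps". First discard every ball of $\mathcal B$ contained in the union of the others; this is harmless for \eqref{eq:covering}. Then order the remaining balls by decreasing radius. A ball is put into $\mathcal S$ exactly when it is not contained in the union of the $(1+\delta)$-dilates of the balls already selected. Covering is then immediate. Take a rejected ball $B$. It lies in the union of the $(1+\delta)$-dilates of the earlier selected balls, all of which have radius at least that of $B$. Selected balls lie in their own dilates. So \eqref{eq:covering} holds.

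For \eqref{eq:multiplicity}, fix $x$ and let $B_1,\dots,B_N\in\mathcal S$ contain $x$, listed in selection order, with $B_j=B(c_j,r_j)$ and $r_1\ge r_2\ge\cdots$. First split by dyadic scale relative to the distance $|x-c_j|$. Write $t_j=r_j-|x-c_j|\in(0,r_j]$ for the depth of $x$ inside $B_j$.

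\textbf{Deep balls.} If $t_j\ge\delta r_j/4$, then $x$ lies well inside $B_j$. In this regime I expect the selection rule, together with a packing argument in the space of (center, radius), to give only $O(1)$ such balls per dyadic range of $r_j$. Using the decreasing radii, the ranges should then sum geometrically; the $\log$-losses in earlier works must be avoided here. Concretely: suppose $B_j$ and $B_k$ with $j<k$ have comparable radii and $x$ is deep in both. If also $|c_j-c_k|\lesssim\delta r_k$, then $B_k\subset(1+\delta)B_j$, contradicting selection. Hence distinct such centers are $\delta r$-separated inside a ball of radius $r$, which gives $\delta^{-n}$. That is too crude. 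The real mechanism has to be the one for shallow balls below.

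\textbf{Key geometric step (main obstacle).} Selection of $B_k$ means some point $y_k\in B_k$ is not covered by $(1+\delta)B_j$ for any $j<k$. Since $r_j\ge r_k$, the region $B_k\setminus(1+\delta)B_j$, viewed from $B_k$, is contained in a cap of $B_k$ of angular radius $\lesssim$ ... . More precisely, the point is that $B_k\not\subset(1+\delta)B_j$ forces the direction $u_k$ toward $y_k$ from $x$ to avoid a spherical cap, of angular radius $\theta\asymp\sqrt\delta$, around the direction in which $B_j$ extends beyond $B_k$. This is where the exponent $\sqrt\delta$ arises: a ball $(1+\delta)$-dilated protrudes past an equal ball by a distance $\delta r$ over an angular width $\sqrt{\delta}$ from parabolic tangency. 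I would therefore assign to each $B_k$ a witness direction $u_k\in S^{n-1}$ and show that the caps $\mathrm{Cap}(u_k,c\sqrt\delta)$ are pairwise disjoint within each dyadic class of $r_k$ and each dyadic class of $t_k/r_k$. Sphere packing then gives at most $C_n\delta^{-(n-1)/2}$ balls per class.

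\textbf{Summing the classes.} The remaining difficulty is to sum over classes without a logarithmic loss. For this I would refine the witness: take $y_k$ on $\partial B_k$ at maximal distance from the earlier dilates. I would then show the caps are disjoint across all classes, using monotonicity of radii and the fact that a deeper, larger earlier ball covers a wider angular cap. If this fails, I would use a weighted sum in which the disjoint cap for class $(a,b)$ has angular size $\sqrt{\delta}\,2^{(a+b)/2}$, giving a convergent geometric series.

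For optimality with equal radii, take unit balls whose centers are a maximal $\sqrt\delta$-separated set on a sphere of radius $1-\delta/10$ about the origin. All these balls contain $0$. Each ball contains a boundary region, near its outward direction, that is missed by the $(1+\delta)$-dilates of all the other balls, because of the parabolic separation at angle $\gtrsim\sqrt\delta$. Hence every ball must be selected, and the multiplicity at $0$ is $\gtrsim\delta^{-(n-1)/2}$.
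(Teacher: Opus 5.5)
There is a genuine gap. It starts with your selection rule, not only with the unfinished angular estimate. Your rule (whose covering argument is fine) only gives witnesses $y_k\in B_k$ with $y_k\notin(1+\delta)B_j$ for earlier selected $B_j$, and that cannot give the bound.

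In $\mathbb{R}^2$ let $B_j=B(j\delta e_1,1)$ for $|j|\le J=\lfloor 1/(2\delta)\rfloor$. Each $B_j$ contains $j\delta e_1+(1-\delta^2/4)e_2$, which lies in no other $B_i$, so your discarding step removes nothing. Process the equal radii from left to right, as your ordering allows (a tiny decreasing perturbation of the radii forces this order). Whenever $B_j$ is selected, $B_{j+1}\subset(1+\delta)B_j$ is rejected. But $B_{j+2}$ contains $\bigl((j+2)\delta+1-\delta/2\bigr)e_1$, which is at distance at least $1+3\delta/2$ from every earlier selected centre, so $B_{j+2}$ is selected. Thus every other ball is selected, and $J+1>1/(2\delta)$ selected balls contain $0$, far more than $A_2\delta^{-1/2}$. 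Moreover, for small $\delta$ every point of $B_{j+2}\setminus(1+\delta)B_j$ lies in $B_{j+4}$. So no choice of witnesses in your scheme has an earlier witness avoiding the later selected balls.

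That two-sided property is the first missing idea. In Lemma~\ref{lem:selection} a remaining ball is deleted as soon as it is covered by the selected dilates \emph{together with the other remaining, undilated balls}. This gives one point $w_i\in B_i$ per selected ball with $w_i\notin B_j$ and $w_j\notin(1+\delta)B_i$ for all $i<j$, and both exclusions are used in Lemma~\ref{lem:twoballs}.

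The second missing idea is the direction to pack, which you leave blank. The radial direction of a witness is not $\sqrt\delta$-separated. Take $B_1=B(0,1)$ and $w_1=(1-\eta)u$, and let $B_2$ be a unit ball containing both $0$ and $w_2=(1+\delta)v$ very near its boundary. If $u$ is tilted from $v$ away from the centre of $B_2$ by an angle of order $\delta$, both exclusions hold.

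The paper instead moves $x$ to $0$ and pushes $w_i$ radially to $p_i\in\partial B_i$. It then uses the spherical midpoint $z(B_i,p_i)$ of $u_i=p_i/|p_i|$ and $m_i=2(u_i\cdot\nu_i)u_i-\nu_i$, where $\nu_i$ is the outer normal at $p_i$; $m_i$ points to the centre of the ball through $0$ internally tangent to $B_i$ at $p_i$. Replacing $B_2$ by that tangent ball makes $(|p_2-c_1|^2-r_1^2)/r_1^2$ a convex quadratic in $(|p_1|/r_1,|p_2|/r_1)$ on a polygon encoding $r_2\le r_1$ and $p_1\notin B_2$. Checking the vertices with Lemma~\ref{lem:midpoints} gives $|z_i-z_j|^2\ge\delta/10$ for every pair, whatever the ratio $r_j/r_i$. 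A single packing on $S^{n-1}$ then yields $C_n\delta^{-\sigma}$. No dyadic radius classes (the source of Han and Lu's logarithm), depth classes, or weighted sums are needed.

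Finally, your sharpness example needs a larger separation constant. With centres on the sphere of radius $\rho=1-\delta/10$ at separation exactly $\sqrt\delta$, the outward boundary point of $B_i$ is at squared distance $1+(1+\rho^{-1})\delta<(1+\delta)^2$ from the nearest neighbouring centre, so the parabolic margin is used up. A $2\sqrt\delta$-separated set works; Proposition~\ref{prop:sharpness} uses centres $\tfrac12\omega_i$ and separation $4\sqrt\delta$.
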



We select the balls in nonincreasing order of radius and choose points $w_i\in B_i$ satisfying $w_i\notin B_j$ and $w_j\notin(1+\delta)B_i$ whenever $i<j$. Han and Lu \cite[Lemma~4.3 and the proof of Theorem~1.5]{HL} pack the Euclidean midpoints of centers and auxiliary points within each dyadic radius class, obtaining $\delta^{-n/2}$ from a packing in $\mathbb{R}^n$, and then sum over $O(\log(1/\delta))$ classes. At a common point of the selected balls, we extend each chosen point radially to the boundary and take the spherical midpoint of its radial direction and the outward normal reflected in the radial line. Lemma~\ref{lem:twoballs} shows that the exclusions in \eqref{eq:witnesses} separate these directions by a constant times $\sqrt\delta$ for arbitrary radius ratios, so a single packing on $S^{n-1}$ removes both the extra factor $\delta^{-1/2}$ and the logarithm.

For a locally integrable function on $\mathbb{R}^n$, let
\begin{equation}\label{eq:classical-maximal}
Mf(x)=\sup_{B\ni x}\frac1{|B|}\int_B|f(y)|\,dy,
\end{equation}
where the supremum is over Euclidean balls. For $0<\alpha<1$, define the sharp Tauberian constant
\begin{equation}\label{eq:classical-tauberian}
\mathcal{C}_n(\alpha)=\sup_{0<|E|<\infty}\frac{|\{M\mathbf{1}_E>\alpha\}|}{|E|}.
\end{equation}
Following Hagelstein and Parissis \cite[Section~1]{HP14}, we refer to estimates for $\mathcal{C}_n(\alpha)-1$ as $\alpha\uparrow1$ as Solyanik estimates, in reference to Solyanik's earlier work on the corresponding bordering (halo) functions \cite{Sol}. Hagelstein and Parissis \cite[Theorem~3]{HP14} proved $\mathcal{C}_n(\alpha)-1\lesssim_n(\alpha^{-1}-1)^{1/(n+1)}$, equivalently $O_n((1-\alpha)^{1/(n+1)})$ as $\alpha\uparrow1$. Their slab example \cite[Example~1]{HP14} gives the lower bound $\mathcal{C}_n(\alpha)-1\gtrsim_n(\alpha^{-1}-1)^{2/(n+1)}$, and they conjectured the matching two-sided asymptotic \cite[Conjecture~1(b)]{HP14}, see also \cite[Problem~8]{H24}.

\begin{maintheorem}\label{thm:solyanik}
There are constants $c_n,C_n>0$ and $\alpha_n\in(0,1)$ such that
\begin{equation}\label{eq:solyanik}
c_n(1-\alpha)^\rho\le\mathcal{C}_n(\alpha)-1\le C_n(1-\alpha)^\rho
\qquad(\alpha_n<\alpha<1).
\end{equation}
\end{maintheorem}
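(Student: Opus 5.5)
The lower bound is already available: the slab example of Hagelstein and Parissis \cite[Example~1]{HP14} gives $\mathcal C_n(\alpha)-1\gtrsim_n(\alpha^{-1}-1)^{\rho}$, and $\alpha^{-1}-1\asymp1-\alpha$ for $\alpha$ near $1$. The work is therefore the upper bound, which I would derive from Theorem~\ref{thm:covering} through a covering argument with an optimized dilation parameter $\delta$.

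Fix $E$ with $0<|E|<\infty$. By inner regularity it suffices to bound $|K|$ for compact $K\subset\{M\mathbf 1_E>\alpha\}$. Cover $K$ by finitely many balls $B$ with $|E\cap B|>\alpha|B|$. Apply Theorem~\ref{thm:covering} with a parameter $\delta\in(0,1/2)$ to be chosen. This gives a subfamily $\mathcal S$ such that $K\subset\bigcup_{B\in\mathcal S}(1+\delta)B$ and $\sum_{\mathcal S}\mathbf 1_B\le A_n\delta^{-\sigma}$. Write $U=\bigcup_{\mathcal S}B$. Then
\[
|K|\le|U|+\sum_{B\in\mathcal S}|(1+\delta)B\setminus B|\le|U|+((1+\delta)^n-1)\sum_{\mathcal S}|B|.
\]
Since $|B|<\alpha^{-1}|E\cap B|$, we have $\sum_{\mathcal S}|B|\le\alpha^{-1}\int_E\sum\mathbf 1_B\le\alpha^{-1}A_n\delta^{-\sigma}|E|$, and so the dilation cost is $\lesssim_n\delta^{1-\sigma}|E|$. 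This is too crude when $\sigma\ge1$, so the argument needs refinement.

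The refinement is to count only the boundary layer, which is where the exponent $\rho$ should come from. The set $U\setminus E$ should be small: each $B\in\mathcal S$ has $|B\setminus E|<(1-\alpha)|B|$. Summing over $B$ with the multiplicity bound gives $|U\setminus E|\le(1-\alpha)A_n\delta^{-\sigma}\alpha^{-1}|E|$. The genuinely extra set is $\bigcup(1+\delta)B\setminus U$, and I would bound it by $\delta|U|$-type quantities using the geometry of the union rather than summing individual shells. Concretely, $\bigcup_{\mathcal S}(1+\delta)B\setminus U$ lies within distance $\delta r_B$ of $\partial U$, and I would try to show $|\bigcup(1+\delta)B\setminus U|\lesssim_n\delta\,|U|$. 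One route is to use the multiplicity bound at a coarser scale, or to apply the covering lemma iteratively over dyadic radius classes, so that the overlapping shells are not double counted beyond $O(1)$.

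Granting a bound of the form
\[
|K|-|E|\lesssim_n\bigl((1-\alpha)\delta^{-\sigma}+\delta\bigr)|E|,
\]
the choice $\delta=(1-\alpha)^{1/(\sigma+1)}=(1-\alpha)^\rho$ balances the two terms and gives $\mathcal C_n(\alpha)-1\lesssim_n(1-\alpha)^\rho$. This choice is admissible ($\delta<1/2$) once $\alpha>\alpha_n$.

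The main obstacle is the shell estimate $|\bigcup(1+\delta)B\setminus U|\lesssim\delta|E|$ without losing the multiplicity factor $\delta^{-\sigma}$. If the direct geometric argument fails, I would try first a two-step selection. Take a Vitali-type disjoint subfamily with $5$-dilates covering, which controls $\sum|B|\lesssim|E|$ at a constant scale. Then apply Theorem~\ref{thm:covering} within clusters, and charge shell volume to a neighborhood of $\partial U$ whose measure is $\lesssim\delta|U|$ by a Minkowski-content argument for unions of balls with comparable radii.
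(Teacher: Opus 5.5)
\textbf{There is a genuine gap, at exactly the step you flag and leave open.} Several parts of your proposal do match the paper: the reduction to compact $K$, the application of Theorem~\ref{thm:covering}, and your estimate $|U\setminus E|\le A_n\delta^{-\sigma}\frac{1-\alpha}{\alpha}|E|$. Citing the Hagelstein--Parissis slab example for the lower bound is also fine; the paper reproves it with a localized version, $E=(-3,3)^{n-1}\times(-2,0)$, with unit balls whose caps of height $h\asymp(1-\alpha)^\rho$ stick out of $E$.

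What you need is $\bigl|\bigcup_{\mathcal S}(1+\delta)B\setminus U\bigr|\lesssim_n\delta|U|$ with no multiplicity loss, and none of your suggested routes supplies it:
\begin{itemize}
\item Summing shells over dyadic radius classes still sums $\delta|B|$ over overlapping balls. This reintroduces $\delta^{-\sigma}$, plus a logarithm from the number of classes.
\item A Vitali-type preselection costs fixed factors like $5^n$. Here you need $|K|\le(1+O(\delta))|U|$, a factor tending to $1$.
\item A Minkowski-content bound for the $\delta r$-neighborhood of a union of equal-radius balls is not a generic fact. Since $U+B(0,\delta r)=\bigcup B(c_j,(1+\delta)r)$, bounding it by $(1+O(\delta))|U|$ is already the statement you need. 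Bounding it instead through the surface area of the union, estimated by the sum of sphere areas, loses the multiplicity again.
\end{itemize}

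The missing idea is the dilation inequality for unions of balls, Lemma~\ref{lem:union-dilation}. For any finite family $B_j=B(c_j,r_j)$ and $a\ge1$,
\[
\Bigl|\bigcup_j aB_j\Bigr|\le a^n\Bigl|\bigcup_j B_j\Bigr|,
\]
with no condition on radii or overlaps. It also follows from Csik\'os's Kneser--Poulsen-type theorem, but has a short power-diagram proof:
\begin{enumerate}
\item Keep only the largest ball at each center.
\item Assign each $x\in V=\bigcup_j aB_j$ to an index minimizing $|x-c_j|^2-a^2r_j^2$. The resulting cell $V_j$ lies in $aB_j$, so $T_j(x)=c_j+(x-c_j)/a$ maps $V_j$ into $B_j$.
\item Adding the two minimality inequalities for $x\in V_i$ and $y\in V_j$ gives $(x-y)\cdot(c_i-c_j)\ge0$.
\item If $T_i(x)=T_j(y)$, then $x-y=-(a-1)(c_i-c_j)$, which contradicts step 3 for distinct centers. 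So the images $T_j(V_j)$ are pairwise disjoint.
\item Since each $T_j$ has Jacobian $a^{-n}$, it follows that $a^{-n}|V|\le|U|$.
\end{enumerate}

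With this lemma your argument closes in one line:
\[
|K|\le(1+\delta)^n|U|\le(1+\delta)^n\bigl(|E|+|U\setminus E|\bigr)\le(1+\delta)^n\Bigl(1+A_n\delta^{-\sigma}\tfrac{1-\alpha}{\alpha}\Bigr)|E|.
\]
The choice $\delta=(1-\alpha)^\rho$ then gives the upper bound. The shell cost $((1+\delta)^n-1)|U|$ is charged to the measure of the union, not to $\sum_{\mathcal S}|B|$. So the multiplicity factor $\delta^{-\sigma}$ enters only through $|U\setminus E|$, where it is multiplied by $1-\alpha$.
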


We also use the inequality
\begin{equation}\label{eq:union-dilation-intro}
\left|\bigcup_j aB_j\right|\le a^n\left|\bigcup_j B_j\right|\qquad(a\ge1),
\end{equation}
which also follows from Csik\'os \cite[Theorem~4.2]{Cs} by applying his contraction theorem to the linear motion of the centers from $c_j$ to $c_j/a$, after scaling the dilated union by $a^{-1}$. The same inequality appears in Dall'Ara \cite{DA}. We include a direct proof. Combining Theorem~\ref{thm:covering} with \eqref{eq:union-dilation-intro} gives
\[
\mathcal{C}_n(\alpha)-1\le C_n\bigl(\delta+(1-\alpha)\delta^{-\sigma}\bigr)
\]
for $\alpha$ close to $1$. The choice $\delta=(1-\alpha)^\rho$ yields the upper bound in Theorem~\ref{thm:solyanik}; the lower bound uses the same cap geometry as \cite[Example~1]{HP14}. The halo-set embedding of Hagelstein and Parissis \cite[Theorem~4]{HP15}, combined with Theorem~\ref{thm:solyanik}, implies that the associated halo function is locally H\"older continuous with exponent $\rho$; the endpoint exponent is sharp.

For a Radon measure $\mu$ on $\mathbb{R}^n$ and $k>1$, define
\[
M_{k,\mu}f(x)=\sup_{B\ni x}\frac{1}{\mu(kB)}\int_B|f|\,d\mu,
\]
with $0/0=0$. The undilated uncentered operator need not be of weak type $(1,1)$ when $n\ge2$, even for Gaussian measure on $\mathbb{R}^2$ \cite{Sj}; the dilated denominator restores the bound. Such operators appear in Nazarov, Treil, and Volberg \cite{NTV} for centered balls and in Tolsa \cite{To} for uncentered cubes. Sawano \cite[Theorems~1.6 and~3.1]{Saw} proved weak $(1,1)$ bounds and analogues of the weighted maximal inequality of Fefferman and Stein \cite{FS} for uncentered balls and every $k>1$. Theorem~\ref{thm:covering} gives the sharp dependence on $k$.

\begin{maintheorem}\label{thm:radon}
For $1<k\le3/2$, every Radon measure $\mu$ on $\mathbb{R}^n$, $f\in L^1(\mu)$, and $\lambda>0$,
\begin{equation}\label{eq:radon-weak}
\mu\{M_{k,\mu}f>\lambda\}
\le\frac{C_n(k-1)^{-\sigma}}{\lambda}\int |f|\,d\mu.
\end{equation}
The factor $(k-1)^{-\sigma}$ is sharp up to dimensional constants. For $1<p<\infty$,
\begin{equation}\label{eq:radon-strong}
\sup_\mu\|M_{k,\mu}\|_{L^p(\mu)\to L^p(\mu)}
\asymp_{n,p}(k-1)^{-\sigma/p},
\end{equation}
where the supremum runs over Radon measures on $\mathbb{R}^n$. Moreover, for $f,g\in L^1_{\mathrm{loc}}(\mu)$ with $g\ge0$,
\begin{equation}\label{eq:radon-fs}
\int (M_{k,\mu}f)^p g\,d\mu
\le C_{n,p}(k-1)^{-\sigma}\int |f|^p M_{\sqrt{k},\mu}g\,d\mu,
\end{equation}
and the power $\sigma$ is optimal. The lower bounds hold for a single probability measure $d\mu=w\,dx$ with $w\in C^\infty(\mathbb{R}^n)$ and $0<w\le C_n$, independent of $k$ and $p$.
\end{maintheorem}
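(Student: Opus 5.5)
We prove Theorem~\ref{thm:radon} by feeding Theorem~\ref{thm:covering} into a standard Vitali-type argument, with $\delta=k-1$.

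\textbf{Weak type $(1,1)$.} Fix $\lambda>0$ and a compact $K\subset\{M_{k,\mu}f>\lambda\}$. For each $x\in K$ choose a ball $B_x\ni x$ with $\int_{B_x}|f|\,d\mu>\lambda\mu(kB_x)$. Pass to a finite subcover $\mathcal B$ of $K$. Apply Theorem~\ref{thm:covering} with $\delta=k-1\in(0,1/2]$; at $\delta=1/2$ we simply use $\delta$ slightly smaller. This gives $\mathcal S\subset\mathcal B$ with $K\subset\bigcup_{\mathcal S}kB$ and multiplicity at most $A_n\delta^{-\sigma}$. Then
\[
\mu(K)\le\sum_{B\in\mathcal S}\mu(kB)<\lambda^{-1}\sum_{B\in\mathcal S}\int_B|f|\,d\mu\le A_n(k-1)^{-\sigma}\lambda^{-1}\int|f|\,d\mu .
\]
Inner regularity of $\mu$ then gives \eqref{eq:radon-weak}. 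No doubling is used: the dilation factor $k$ appears only in the denominator, and this is exactly what the $(1+\delta)$-cover needs.

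\textbf{$L^p$ and Fefferman--Stein upper bounds.} For \eqref{eq:radon-strong} we use Marcinkiewicz interpolation between \eqref{eq:radon-weak} and $L^\infty$. This gives only $(k-1)^{-\sigma/p}$ up to a $p$-dependent factor if we interpolate carefully. Write $\{Mf>2\lambda\}\subset\{M(f\mathbf 1_{|f|>\lambda})>\lambda\}$; then $\|Mf\|_p^p\lesssim_p (k-1)^{-\sigma}\|f\|_p^p$, which is $(k-1)^{-\sigma/p}$ for the norm.

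For \eqref{eq:radon-fs} we run the same covering argument with the measure $g\,d\mu$ on the level set. For a selected ball $B$ we need $\int_{kB}g\,d\mu\le \mu(kB)\,M_{\sqrt k,\mu}g(y)$ for $y\in B$, modulo dilation bookkeeping. The cleanest route is to split $k=\sqrt k\cdot\sqrt k$. We use the weak bound with parameter $\sqrt k$, noting $\sqrt k-1\asymp k-1$. We then control the average of $g$ over $\sqrt k B$ relative to $\mu(kB)$ by $M_{\sqrt k,\mu}g$ at points of $B$. Concretely, we prove the weak estimate
\[
\int_{\{M_{k,\mu}f>\lambda\}}g\,d\mu\lesssim (k-1)^{-\sigma}\lambda^{-1}\int|f|M_{\sqrt k,\mu}g\,d\mu,
\]
using $\frac{\int_{\sqrt kB}g}{\mu(kB)}\le \inf_B M_{\sqrt k,\mu}g$. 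This holds because $\sqrt k\cdot\sqrt k B=kB$ and $\sqrt kB\ni y$ for $y\in B$. We then interpolate with the trivial $L^\infty$ bound $M_{k,\mu}f\le\|f\|_\infty$ on $L^\infty(g\,d\mu)$, as in Fefferman--Stein.

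\textbf{Lower bounds.} This is the main obstacle. Theorem~\ref{thm:covering} states the optimality of $\sigma$ via equal-radius balls. We need a measure $w\,dx$ realizing it. We would adapt the cap geometry of \cite[Example~1]{HP14}. Take $N\asymp\delta^{-\sigma}$ unit balls whose centers lie on a sphere of radius about $1$ at angular separation about $\sqrt\delta$. Their boundaries then pass near a common small region $E$ of width about $\delta$ around the origin. Each ball $B_j$ contains $E$ together with a thin cap, and $kB_j\setminus B_j$ has measure about $\delta|B_j|$.

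Let $w$ be smooth, bounded, and concentrated so that $\mu(E)$ is a fixed fraction of each $\mu(kB_j)$. Take $f=\mathbf 1_E$. Then $M_{k,\mu}f\gtrsim1$ on $\bigcup B_j$, while the caps $B_j\setminus E$ are nearly disjoint with total measure about $N\mu(E)$. This gives weak constant $\gtrsim\delta^{-\sigma}$.

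Obtaining a single $w$ for all $k$ requires superposing these configurations at dyadically many scales, placed in disjoint regions, with summable weights. Smoothness is arranged by mollification. For $L^p$ we test with the same $f$, giving $(k-1)^{-\sigma/p}$. For Fefferman--Stein we take $g=\mathbf 1$, so $M_{\sqrt k,\mu}g\le1$ and the bound reduces to the $L^p$ lower bound with $p\to1$ behaviour $\delta^{-\sigma}$. Verifying the angular-separation geometry, namely that the caps are disjoint while all balls see $E$ with density bounded below, is the delicate calculation.
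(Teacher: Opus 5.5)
Your upper bounds are correct and essentially the paper's argument. The paper uses the same key inequality $\mu(kB)^{-1}\int_{\sqrt k B}g\,d\mu\le\inf_B M_{\sqrt k,\mu}g$, the same covering with $\delta=\sqrt k-1$, and the same truncation at level $\lambda/2$. The only difference is that the paper gets \eqref{eq:radon-weak} from the weighted estimate with $g=1$, while you run the covering directly with $\delta=k-1$; either works.

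The gap is in the lower bounds, which you defer as ``the delicate calculation''. You never construct $w$. You only state two properties it should have: $\mu(kB_j)\lesssim\mu(E)$ for every $j$, and $\mu(\bigcup_jB_j)\gtrsim N\mu(E)$. Together these say the dilates $kB_j$ have bounded average $\mu$-multiplicity on a set carrying about $N$ times the mass of $E$. That is the entire content of the sharpness claim.

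Your heuristic does not produce such a $w$. The thin caps and the estimate $|kB_j\setminus B_j|\approx\delta|B_j|$ come from the Lebesgue slab example of \cite{HP14}, which governs $\mathcal C_n(\alpha)-1$, not the factor $(k-1)^{-\sigma}$. Your unit balls pass through a common small region and have centres only about $\sqrt\delta$ apart in angle. So the sets $B_j\setminus E$ overlap almost completely, and a point of $B_j$ close to $E$ lies in many of the other dilates $kB_i$. Mass spread over such caps is therefore counted in many denominators $\mu(kB_i)$.

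What you need is mass at points of $B_j$ lying outside $kB_i$ for every $i\ne j$. Such points exist only well away from $E$, near the ray through the centre of $B_j$. This is exactly the configuration of Proposition~\ref{prop:sharpness}: $q_j\in B_j$ and $q_j\notin(1+\delta)B_i$ for $i\ne j$. You mention that optimality statement but do not use it.

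Here is how the paper builds $w$:
\begin{itemize}
\item It works at the dyadic scales $\delta_j=2^{-j}$ and places bumps of equal mass $b_j$ and radius $\eta_j=\delta_j/100$ at $0$ and at every $q_{j,i}$. For $k-1\le\delta_j$, each $kB_{j,i}$ then contains exactly two bumps.
\item It takes $f$ to be the indicator of the bump at $0$.
\item It adds a Gaussian so that $w>0$, and translates the $j$-th configuration far enough out that the Gaussian mass of $B(a_j,2)$ is at most $b_j$.
\item It chooses $b_j=2^{-j}\eta_j^n/(N_j+1)$, which keeps $w$ bounded.
\end{itemize}
Then $M_{k,\mu}f\ge 1/3$ on $N_j$ disjoint bumps of mass $b_j$, while $\|f\|_{L^1(\mu_0)}\le 2b_j$.

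Without an explicit placement of this kind, none of the three lower bounds is established. Once you have it, the optimality of $\sigma$ in \eqref{eq:radon-fs} follows for each fixed $p$ from the $L^p$ lower bound with $g=1$. No limit $p\to1$ is involved.
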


Sections~\ref{sec:covering}-\ref{sec:radon} prove Theorems~\ref{thm:covering}-\ref{thm:radon}, respectively.

\section{The sharp covering theorem}\label{sec:covering}

\subsection{Selection of balls}

\begin{lemma}\label{lem:selection}
For every finite family $\mathcal{B}$ and every $\delta>0$, one can choose balls $B_i=B(c_i,r_i)\in\mathcal{B}$, $1\le i\le N$, and points $w_i\in B_i$ such that \eqref{eq:covering} holds with $\mathcal{S}=\{B_1,\ldots,B_N\}$, $r_1\ge r_2\ge\cdots\ge r_N,$ and, whenever $i<j$,
\begin{equation}\label{eq:witnesses}
w_i\notin B_j,\qquad w_j\notin(1+\delta)B_i.
\end{equation}
\end{lemma}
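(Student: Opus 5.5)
We will use a greedy selection in nonincreasing order of radius, taking as witness a point of the union that is still uncovered. Let $U=\bigcup_{B\in\mathcal B}B$. Order $\mathcal B$ by nonincreasing radius, breaking ties arbitrarily. We build $B_1,B_2,\ldots$ and $w_1,w_2,\ldots$ inductively. Suppose $B_1,\dots,B_{m}$ have been chosen. Set
\[
V_m=U\setminus\bigcup_{i\le m}(1+\delta)B_i .
\]
If $V_m=\emptyset$, we stop with $N=m$, and \eqref{eq:covering} holds. Otherwise, let $r$ be the largest radius of a ball $B\in\mathcal B$ with $B\cap V_m\neq\emptyset$. Every point of $V_m$ lies in some ball of $\mathcal B$, so such a ball exists. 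Choose $B_{m+1}$ of radius $r$ meeting $V_m$, and pick $w_{m+1}\in B_{m+1}\cap V_m$. Since $w_{m+1}\in B_{m+1}\setminus(1+\delta)B_{m+1}$ is impossible, $B_{m+1}$ is not among the balls already chosen. As $\mathcal B$ is finite, the process terminates.

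\textbf{Monotonicity of radii.} Because $V_{m+1}\subset V_m$, any ball meeting $V_{m+1}$ also meets $V_m$. Hence the maximal radius over balls meeting $V_{m+1}$ is at most the one over balls meeting $V_m$, which gives $r_{m+1}\ge r_{m+2}$.

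\textbf{The second condition of \eqref{eq:witnesses}.} For $i<j$ we have $w_j\in V_{j-1}$, so $w_j\notin(1+\delta)B_i$ by construction.

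\textbf{The first condition of \eqref{eq:witnesses}.} Suppose instead that $w_i\in B_j$ for some $i<j$. At step $i$ the ball $B_j$ met $V_{i-1}$, since it contains $w_i\in V_{i-1}$. Maximality of $r_i$ then gives $r_j\le r_i$, which is consistent with monotonicity and yields no contradiction by itself.

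This is the main obstacle, and the plan is to fix it by modifying the choice of witness. Among balls of the maximal radius $r$ that meet $V_m$, we cannot avoid future balls containing $w_{m+1}$ unless $w_{m+1}$ is placed suitably. So we choose $w_{m+1}\in V_m$ so that no ball of $\mathcal B$ of radius smaller than $r$ contains it. If that is impossible, we instead rearrange the procedure: we process points rather than balls.

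Concretely, consider $w\in V_m$ and the quantity $\rho(w)=\max\{r_B : w\in B\in\mathcal B\}$. We choose $w_{m+1}\in V_m$ maximizing $\rho$. Such a maximizer exists because $\rho$ takes finitely many values. We then let $B_{m+1}$ be a ball of radius $\rho(w_{m+1})$ containing $w_{m+1}$. Now suppose a later $B_j$, with $j>i$, contained $w_i$. Since $w_i\in V_{i-1}$, the definition of $\rho(w_i)$ gives $r_j\le\rho(w_i)=r_i$. If $r_j<r_i$, this is still not excluded, so this refinement alone is not enough.

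The decisive refinement is to take $w_{m+1}$ in the part of $V_m$ covered by balls of the maximal radius $r$ only. If every point of $B_{m+1}\cap V_m$ also lies in some smaller ball, we argue instead that the smaller balls containing $w_i$ are never selected later. A ball $B_j$ is selected only when it contains a point of $V_{j-1}$. Selection is therefore driven by uncovered points, not by containment of $w_i$, so the constraint $w_i\notin B_j$ must come from the choice of $w_i$.

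To secure this, pick $w_i\in V_{i-1}$ with $\rho$ maximal and, among these, with the minimal number of balls of $\mathcal B$ containing it. The verification of $w_i\notin B_j$ is the step I expect to require the most care, and it may force a minimality argument over this finite configuration.
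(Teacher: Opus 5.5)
Your proposal does not prove the lemma. The first condition in \eqref{eq:witnesses}, $w_i\notin B_j$ for $i<j$, is never established. The refinements you try (maximizing $\rho$, then minimizing the number of balls through the witness) cannot establish it either, because the underlying rule is itself wrong: always selecting a ball of largest radius that meets the uncovered set $V_m$ can make the lemma impossible to satisfy.

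Here is a planar configuration that shows this. Take $\delta\le 1/100$, $B_0=B(0,1)$, and $D_m=B(\tfrac1{20}e_m,\tfrac{99}{100})$ for $m=0,1,2$, where $e_m=(\cos(2\pi m/3),\sin(2\pi m/3))$.

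The $D_m$ cover $B_0$. For $|x|<1$ pick $m$ with $x\cdot e_m\ge|x|/2$. Then $|x-\tfrac1{20}e_m|^2\le(|x|-\tfrac1{40})^2+\tfrac{3}{1600}<0.96<(\tfrac{99}{100})^2$.

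Each $D_m$ also has a private point. The point $x_m=\tfrac{207}{200}e_m$ lies in $D_m$. It satisfies $|x_m|>1+\delta$ and $|x_m-\tfrac1{20}e_{m'}|>1.06$ for $m'\ne m$, so it lies in no $(1+\delta)$-dilate of any other ball of the family.

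Hence every subfamily satisfying \eqref{eq:covering} contains $D_0,D_1,D_2$. Suppose it also contained $B_0$. Then $B_0=B_1$, since its radius is strictly largest. Its witness $w_1\in B_0$ would lie in some $D_m=B_j$ with $j>1$, contradicting $w_1\notin B_j$. So every valid selection must discard $B_0$, even though $B_0$ is the largest ball and meets $V_0$. Both of your rules select $B_0$ at the first step, and no choice of witness can repair that.

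The missing idea is pruning of redundant candidates. The paper keeps a pool $\mathcal R$ (initially $\mathcal B$) with the invariant $\bigcup_{B\in\mathcal B}B\subset\bigcup_{B\in\mathcal S}(1+\delta)B\cup\bigcup_{B\in\mathcal R}B$. Before each selection it deletes, one at a time, every $D\in\mathcal R$ contained in $\bigcup_{B\in\mathcal S}(1+\delta)B\cup\bigcup_{B\in\mathcal R\setminus\{D\}}B$. Each such deletion preserves the invariant; in the example above, $B_0$ is deleted at the outset.

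Once no ball can be deleted, a largest remaining candidate $B_i$ contains a point $w_i$ lying outside every $(1+\delta)B$ with $B\in\mathcal S$, and outside every other candidate. The required properties then follow directly:
\begin{itemize}
\item Every later $B_j$ is drawn from the current pool, so $w_i\notin B_j$ is automatic.
\item $w_j\notin(1+\delta)B_i$ because $B_i$ is already in $\mathcal S$ when $w_j$ is chosen.
\item The radii are nonincreasing because the pool only shrinks.
\item When the pool is empty, the invariant gives \eqref{eq:covering}.
\end{itemize}
In short, the witness must avoid all remaining candidates, not just the dilates already placed, and that forces you to allow large balls to be thrown away.
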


\begin{proof}
We construct the balls successively. Let $\mathcal{S}$ denote the balls already selected and $\mathcal{R}$ those still under consideration. Initially $\mathcal{S}=\varnothing$ and $\mathcal{R}=\mathcal{B}$, and throughout the construction we keep
\begin{equation}\label{eq:invariant}
\bigcup_{B\in\mathcal{B}}B
\subset
\bigcup_{B\in\mathcal{S}}(1+\delta)B
\cup
\bigcup_{B\in\mathcal{R}}B.
\end{equation}

First remove from $\mathcal{R}$ any ball $D$ which is already covered by the other sets, that is, any $D$ satisfying
\begin{equation}\label{eq:deletion}
D\subset
\bigcup_{B\in\mathcal{S}}(1+\delta)B
\cup
\bigcup_{B\in\mathcal{R}\setminus\{D\}}B.
\end{equation}
Continue until no such ball remains. This does not affect \eqref{eq:invariant}.

If $\mathcal{R}$ is nonempty, choose a ball $B_i$ of largest radius in $\mathcal{R}$. Since $B_i$ cannot be removed, there is a point
\[
w_i\in
B_i\setminus
\left(
\bigcup_{B\in\mathcal{S}}(1+\delta)B
\cup
\bigcup_{D\in\mathcal{R}\setminus\{B_i\}}D
\right).
\]
Add $B_i$ to $\mathcal{S}$ and remove it from $\mathcal{R}$. Then return to the deletion step above and continue.

Since $\mathcal{B}$ is finite, the construction eventually stops. At that point $\mathcal{R}=\varnothing$, so \eqref{eq:invariant} gives \eqref{eq:covering}. Because at each stage we choose a largest remaining ball, the selected radii are nonincreasing.

Finally, suppose $i<j$. When $w_i$ was chosen, the later ball $B_j$ was still in $\mathcal{R}$, so $w_i\notin B_j$. When $w_j$ was chosen, $B_i$ had already been selected, so $w_j\notin(1+\delta)B_i$. 
\end{proof}
\subsection{An angular estimate for two balls}

\begin{lemma}\label{lem:midpoints}
Let $u,m,v,\ell\in S^{n-1}$ satisfy $\alpha=u\cdot m\ge0$ and $\beta=v\cdot\ell\ge0$. Set
\[
z=\frac{u+m}{|u+m|},\qquad z'=\frac{v+\ell}{|v+\ell|},\qquad h=|z-z'|^2,
\]
and
\[
P=m\cdot v,\qquad Q=\ell\cdot u,\qquad T=\alpha+\beta-P-Q,\qquad D=\alpha\beta-PQ.
\]
Then
\begin{equation}\label{eq:midpointbounds}
T\le h,\qquad D\le2h.
\end{equation}
\end{lemma}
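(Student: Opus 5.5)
The plan is to parametrize each pair of unit vectors by its spherical midpoint and half-angle, so that $P$ and $Q$ take a symmetric form. Since $\alpha=u\cdot m\ge0$, we have $|u+m|^2=2+2\alpha$. Write
\[
u=cz+sw,\qquad m=cz-sw,
\]
where $c=\cos\theta=\sqrt{(1+\alpha)/2}$, $s=\sin\theta$, and $w\perp z$ is a unit vector (arbitrary if $s=0$). Because $\alpha\ge0$, we have $\theta\in[0,\pi/4]$, and $\alpha=\cos2\theta$. Similarly write $v=c'z'+s'w'$ and $\ell=c'z'-s'w'$ with $\theta'\in[0,\pi/4]$ and $\beta=\cos2\theta'$. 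Put $t=z\cdot z'=1-h/2$ and $\omega=w\cdot w'$. Expanding gives
\[
P=Y+X,\qquad Q=Y-X,
\]
where
\[
Y=cc't-ss'\omega,\qquad X=cs'(z\cdot w')-sc'(w\cdot z').
\]
The key point is that the cross terms $X$ enter $P$ and $Q$ with opposite signs.

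\emph{The bound $T\le h$.} We have $T=\cos2\theta+\cos2\theta'-2cc't+2ss'\omega$. Using $\omega\le1$ and $1-t=h/2$,
\[
T\le (c-c')^2-(s-s')^2+cc'h.
\]
A trigonometric identity gives
\[
(c-c')^2-(s-s')^2=\cos2\theta+\cos2\theta'-2\cos(\theta+\theta')=-2\cos(\theta+\theta')\bigl(1-\cos(\theta-\theta')\bigr).
\]
This is $\le0$ because $\theta+\theta'\le\pi/2$. Since $cc'\le1$, we conclude $T\le h$.

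\emph{The bound $D\le2h$.} Here $D=\alpha\beta-Y^2+X^2$, and I would bound the two pieces separately.

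First, $X^2\le h$. Since $w\perp z$, we have $|w\cdot z'|\le\sqrt{1-t^2}$, and likewise $|z\cdot w'|\le\sqrt{1-t^2}$. Hence
\[
X^2\le(1-t^2)(cs'+sc')^2=(1-t^2)\sin^2(\theta+\theta')\le 2(1-t)=h.
\]

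Second, $\alpha\beta-Y^2\le h$. We have
\[
\alpha\beta=\cos2\theta\cos2\theta'=\cos^2(\theta+\theta')-\sin^2(\theta-\theta')\le\cos^2(\theta+\theta').
\]
Also
\[
Y\ge cc't-ss'=\cos(\theta+\theta')-cc'(1-t)\ge\cos(\theta+\theta')-h/2.
\]
Write $\kappa=\cos(\theta+\theta')\in[0,1]$ and split into three cases.
\begin{itemize}
\item If $Y\ge\kappa$, then $\alpha\beta-Y^2\le0$.
\item If $0\le Y<\kappa$, then $\kappa^2-Y^2=(\kappa-Y)(\kappa+Y)\le (h/2)\cdot 2=h$.
\item If $Y<0$, then $\kappa<h/2$, so $\alpha\beta-Y^2\le\kappa^2<h^2/4\le h$, using $h\le4$.
\end{itemize}
Adding the two pieces gives $D\le2h$.

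The main obstacle is finding the decomposition in which the cross terms cancel in $P+Q$ and appear only as $+X^2$ in $D$. After that, the work reduces to one-variable trigonometric inequalities. The essential input is $\theta,\theta'\le\pi/4$, which is exactly where $\alpha,\beta\ge0$ is used.
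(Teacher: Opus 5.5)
Your proof is correct and follows the paper's argument essentially line by line. Both use the same midpoint/half-angle parametrization and the same splitting $P=Y+X$, $Q=Y-X$, together with the bounds $Y\ge\cos(\theta+\theta')-h/2$, $|X|\le\sin(\theta+\theta')\sqrt h$, $\alpha+\beta\le2\cos(\theta+\theta')$ and $\alpha\beta\le\cos^2(\theta+\theta')$. Your three-case argument for $\alpha\beta-Y^2\le h$ is an unpacked form of the paper's inequality $(H-h/2)_+^2\ge H^2-h$.
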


\begin{proof}
For some $0\le\theta,\phi\le\pi/4$ and unit vectors $e\perp z$, $f\perp z'$, write
\begin{align*}
u&=\cos\theta\,z+\sin\theta\,e,&
m&=\cos\theta\,z-\sin\theta\,e,\\
v&=\cos\phi\,z'+\sin\phi\,f,&
\ell&=\cos\phi\,z'-\sin\phi\,f.
\end{align*}
Thus $\alpha=\cos(2\theta)$ and $\beta=\cos(2\phi)$. Put
\begin{align*}
H&=\cos(\theta+\phi),\\
A&=\cos\theta\cos\phi\,z\cdot z'-\sin\theta\sin\phi\,e\cdot f,\\
X&=\cos\theta\sin\phi\,z\cdot f-\sin\theta\cos\phi\,e\cdot z'.
\end{align*}
Then $P=A+X$ and $Q=A-X$. Since $z\cdot z'=1-h/2$ and $e\cdot f\le1$,
\[
A\ge H-\tfrac12\cos\theta\cos\phi\,h\ge H-h/2.
\]
Also $|z\cdot f|,|e\cdot z'|\le\sqrt h$, whence
\[
|X|\le\sin(\theta+\phi)\sqrt h\le\sqrt h.
\]
The identities
\[
\alpha+\beta=2H\cos(\theta-\phi)\le2H,\qquad
\alpha\beta=H^2-\sin^2(\theta-\phi)\le H^2
\]
give $T=\alpha+\beta-2A\le h$. Finally, $0\le H\le1$ implies
\[
A^2\ge(H-h/2)_+^2\ge H^2-h.
\]
Consequently $D=\alpha\beta-A^2+X^2\le2h$.
\end{proof}

Let $B=B(c,r)$ contain the origin, and let $p\in\partial B$. Define
\begin{equation}\label{eq:directions}
u=\frac p{|p|},\qquad \nu=\frac{p-c}{r},\qquad
\alpha=u\cdot\nu,\qquad m=2\alpha u-\nu,\qquad
z(B,p)=\frac{u+m}{|u+m|}.
\end{equation}
These quantities are well defined. Indeed,
\begin{equation}\label{eq:acute}
0<r^2-|c|^2=2r|p|\alpha-|p|^2
\end{equation}
gives $\alpha>0$ and $|p|/r<2\alpha$. Moreover $|m|=1$ and $u\cdot m=\alpha$, so $u+m\ne0$.

The ball through $0$ and $p$ which is internally tangent to $B$ at $p$ has center $r'm$, where $r'=|p|/(2\alpha)$. Thus $m$ is the direction of its center, and $z(B,p)$ is the midpoint of the shorter spherical arc joining $u$ and $m$. The next lemma shows that, if $r_1\ge r_2$, $p_1\notin B_2$, and $p_2\notin(1+\delta)B_1$, then
\[
|z(B_1,p_1)-z(B_2,p_2)|^2\ge\frac{\delta}{10}.
\]
This reduces the required estimate for collections of balls to a packing argument on $S^{n - 1}$.

\begin{lemma}\label{lem:twoballs}
Let $B_i=B(c_i,r_i)$ contain the origin, let $r_1\ge r_2$, and let $p_i\in\partial B_i$, $i=1,2$. Suppose $p_1\notin B_2$. Then
\begin{equation}\label{eq:twoball}
\frac{|p_2-c_1|^2-r_1^2}{r_1^2}
\le20\,|z(B_1,p_1)-z(B_2,p_2)|^2.
\end{equation}
In particular, if $p_2\notin(1+\delta)B_1$, then
\begin{equation}\label{eq:separation}
|z(B_1,p_1)-z(B_2,p_2)|^2\ge\frac{2\delta+\delta^2}{20}\ge\frac\delta{10}.
\end{equation}
\end{lemma}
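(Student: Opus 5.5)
The plan is to translate both the hypothesis $p_1\notin B_2$ and the target quantity $|p_2-c_1|^2-r_1^2$ into polynomial expressions in the scalar invariants of Lemma~\ref{lem:midpoints}, and then to bound the target by a nonnegative combination of $T$ and $D$.

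\textbf{Setup.} For $i=1,2$ write
\[
s_i=|p_i|,\qquad u_i=p_i/s_i,\qquad \nu_i=(p_i-c_i)/r_i,\qquad \alpha_i=u_i\cdot\nu_i,\qquad m_i=2\alpha_iu_i-\nu_i,
\]
as in \eqref{eq:directions}. Apply Lemma~\ref{lem:midpoints} with $(u,m,v,\ell)=(u_1,m_1,u_2,m_2)$. Then $\alpha=\alpha_1>0$, $\beta=\alpha_2>0$, $P=m_1\cdot u_2$ and $Q=m_2\cdot u_1$. Since $h=|z(B_1,p_1)-z(B_2,p_2)|^2$, it suffices to prove
\[
\frac{|p_2-c_1|^2-r_1^2}{r_1^2}\le aT+bD
\]
for constants $a,b\ge0$ with $a+2b\le20$; \eqref{eq:midpointbounds} then gives \eqref{eq:twoball}.

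\textbf{Expanding both quantities.} Use $c_i=p_i-r_i\nu_i=s_iu_i-r_i(2\alpha_iu_i-m_i)$ and the power identity \eqref{eq:acute}, $r_i^2-|c_i|^2=2r_is_i\alpha_i-s_i^2$. Then
\[
|p_2-c_1|^2-r_1^2=s_2^2-2s_2\,u_2\cdot c_1-(2r_1s_1\alpha_1-s_1^2).
\]
Here $u_2\cdot c_1=s_1(u_1\cdot u_2)-r_1\bigl(2\alpha_1\,u_1\cdot u_2-P\bigr)$.

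The hypothesis $p_1\notin B_2$ reads
\[
s_1^2-2s_1\,u_1\cdot c_2\ge 2r_2s_2\alpha_2-s_2^2,
\]
with $u_1\cdot c_2=s_2(u_1\cdot u_2)-r_2\bigl(2\alpha_2\,u_1\cdot u_2-Q\bigr)$.

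Normalize by setting $t_i=s_i/r_i\in(0,2\alpha_i)$ and $\kappa=r_2/r_1\in(0,1]$. Both expressions are then polynomials in $t_1$, $t_2$, $\kappa$, $\alpha_1$, $\alpha_2$, $P$, $Q$ and $g=u_1\cdot u_2$.

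\textbf{Eliminating $g$.} The inner product $g$ appears in both expressions and is not controlled by $T$ or $D$. I will add a positive multiple $\lambda$ of the hypothesis inequality (rearranged so that it reads ``$\ge0$'') to the target expression, choosing $\lambda$ so that the coefficient of $g$ cancels. The natural guess is $\lambda$ proportional to $t_2/(t_1\kappa)$, which matches the two cross terms $-2s_2s_1g$ and $+2s_1s_2g$. After this step the remainder should be a quadratic form in the variables $t_1$ and $\kappa t_2$, with coefficients built from $\alpha_1,\alpha_2,P,Q$. I then try to recognize it as
\[
-(\text{a square})\;+\;(\text{multiple of }T)\;+\;(\text{multiple of }D).
\]
The discriminant-type structure suggests that $D=\alpha_1\alpha_2-PQ$ arises from completing the square in $t_1$, and that $T$ arises from the linear terms $2\alpha_i-P-Q$.

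\textbf{Main obstacle.} The difficulty is making these coefficients uniform in the radius ratio $\kappa$ and in $t_i\in(0,2\alpha_i)$. I expect to need $\kappa\le1$ (that is, $r_1\ge r_2$) together with $t_i<2\alpha_i\le2$ to bound the multipliers of $T$ and $D$ by absolute constants. The first thing I would try is to treat the two regimes $t_2\le t_1$ and $t_2>t_1$ separately. In each regime the quantities $\lambda$ and the leftover coefficients are bounded by small numbers like $4$ or $8$, which is where the constant $20$ should come from.

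\textbf{The consequence \eqref{eq:separation}.} This is immediate from \eqref{eq:twoball}. The condition $p_2\notin(1+\delta)B_1$ gives $|p_2-c_1|^2\ge(1+\delta)^2r_1^2$, so the left side of \eqref{eq:twoball} is at least $2\delta+\delta^2\ge2\delta$. Dividing by $20$ yields $h\ge(2\delta+\delta^2)/20\ge\delta/10$.
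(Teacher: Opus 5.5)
Your deduction of \eqref{eq:separation} from \eqref{eq:twoball} is correct. However, the proposal does not prove \eqref{eq:twoball}, and its key step fails. Normalize $r_1=1$, so that $r_2=\kappa$ and $s_i=|p_i|$. Expanding as you describe gives
\[
F=s_2^2-2s_2P+s_1(s_1-2\alpha_1)+2s_2(2\alpha_1-s_1)\,g,\qquad
G=s_1(s_1-2\kappa Q)+s_2(s_2-2\kappa\alpha_2)+2s_1(2\kappa\alpha_2-s_2)\,g,
\]
where $F$ is the left side of \eqref{eq:twoball} and $G\ge0$ is the hypothesis $p_1\notin B_2$.

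\begin{itemize}
\item The cross term $-2s_1s_2g$ has the same sign in both expressions, not opposite signs.
\item The full coefficients of $g$, namely $2s_2(2\alpha_1-s_1)$ and $2s_1(2\kappa\alpha_2-s_2)$, are strictly positive by \eqref{eq:acute}.
\item So $F+\lambda G$ has a positive $g$-coefficient for every $\lambda\ge0$, and $g$ cannot be eliminated by adding a nonnegative multiple of the hypothesis. A negative multiple only yields lower bounds for $F$.
\item Separately, $F$ and $G$ share the positive semidefinite quadratic part $s_1^2-2gs_1s_2+s_2^2$. For a fixed multiplier the combination is therefore convex in $(s_1,s_2)$. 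It cannot be rewritten as minus a square plus multiples of $T$ and $D$, since $T$ and $D$ do not depend on $s_1,s_2$.
\end{itemize}
The bound must come from the boundary of the admissible region. Your plan never engages with that boundary beyond the unjustified split $t_2\le t_1$ versus $t_2>t_1$.

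The missing idea is to remove $g$ and $\kappa$ from the hypothesis before doing any algebra. This is what the paper does.
\begin{itemize}
\item Replace $B_2$ by the ball $B_2'$ through $0$ that is internally tangent to $B_2$ at $p_2$. It has radius $r_2'=|p_2|/(2\alpha_2)<r_2\le r_1$ and center $r_2'm_2$.
\item Since $B_2'\subset B_2$, you still have $p_1\notin B_2'$. Because $B_2'$ passes through the origin, this becomes the linear, $g$-free constraint $\alpha_2 s_1\ge Q s_2$.
\item The inequality $r_2'\le r_1$ gives $s_2\le 2\alpha_2$. Together with $0\le s_1\le 2\alpha_1$, this cuts out a polygon in $(s_1,s_2)$: a rectangle, quadrilateral or triangle according to where $Q$ sits relative to $0$ and $\alpha_1$.
\item The convex function $F$ attains its maximum over this polygon at a vertex.
\item At each vertex, $F/4$ is at most $5h$, using $T\le h$, $D\le 2h$, $0<\alpha_i\le1$ and only $g\le1$. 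For instance, at $(0,2\alpha_2)$ when $Q\le0$ one has $F/4=\alpha_2(\alpha_2-P)+2\alpha_1\alpha_2 g$.
\end{itemize}
This vertex analysis is where the constant $20$ comes from. Your proposal contains neither the tangent-ball reduction nor a substitute for it.
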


\begin{proof}
Use \eqref{eq:directions} for the first ball, with notation $u,\nu_1,\alpha,m$, and for the second ball, with notation $v,\nu_2,\beta,\ell$. Thus $\alpha=u\cdot m>0$ and $\beta=v\cdot\ell>0$. Write
\[
P=m\cdot v,\quad Q=\ell\cdot u,\quad \gamma=u\cdot v,\quad
h=|z(B_1,p_1)-z(B_2,p_2)|^2.
\]
Lemma~\ref{lem:midpoints} gives
\begin{equation}\label{eq:TD}
T:=\alpha+\beta-P-Q\le h,\qquad D:=\alpha\beta-PQ\le2h.
\end{equation}

Replace $B_2$ by the internally tangent ball
\[
B'_2=B(c'_2,r'_2),\qquad r'_2=\frac{|p_2|}{2\beta},\qquad
c'_2=p_2-r'_2\nu_2=r'_2\ell.
\]
By \eqref{eq:acute}, $r'_2<r_2$. The centers satisfy $|c'_2-c_2|=r_2-r'_2$, so $B'_2\subset B_2$. Its boundary contains both $0$ and $p_2$. This replacement leaves $v,\nu_2,\beta,\ell$ unchanged, and $p_1\notin B'_2$.

Normalize by $r_1$ and set $s=|p_1|/r_1$ and $t=|p_2|/r_1$. Since $r'_2\le r_2\le r_1$, and since $p_1\notin B'_2$, we have
\begin{equation}\label{eq:polygon}
0\le s\le2\alpha,\qquad 0\le t\le2\beta,\qquad \beta s\ge Qt.
\end{equation}
For the last inequality, use $c'_2=r'_2\ell$ to obtain $|p_1|^2-2|p_1|r'_2Q\ge0$, divide by $|p_1|>0$, and substitute $r'_2/r_1=t/(2\beta)$.

Because $c_1/r_1=(s-2\alpha)u+m$, the left side of \eqref{eq:twoball} equals
\begin{equation}\label{eq:F}
F(s,t)=t^2-2tP+(s-2\alpha)(s-2t\gamma).
\end{equation}
This is a convex quadratic in $(s,t)$: it is the squared norm of the affine vector $tv-(s-2\alpha)u-m$, minus $1$. Its maximum over the polygon \eqref{eq:polygon} is therefore attained at a vertex. The vertices $(0,0)$ and $(2\alpha,0)$ give $F=0$. The other possible vertices are treated below.

\smallskip
\noindent\textit{The vertex $(2\alpha,2\beta)$, when $Q\le\alpha$.}
Here
\[
\frac F4=\beta(\beta-P)\le\beta T\le h,
\]
since $T=(\alpha-Q)+(\beta-P)$ and $0<\beta\le1$.

\smallskip
\noindent\textit{The vertex $(2Q,2\beta)$, when $0<Q\le\alpha$.}
Put $L=\alpha-Q\ge0$. Direct substitution gives
\begin{align}
\frac F4
&=\beta T+L(\beta-Q)-2\beta L(1-\gamma)\label{eq:vertex2a}\\
&=D+(\beta-Q)T-2\beta L(1-\gamma),\label{eq:vertex2b}
\end{align}
where the second identity uses $QT+L(\beta-Q)=D$. If $\beta<Q$, \eqref{eq:vertex2a} gives $F/4\le h$. If $\beta\ge Q$, then $0\le\beta-Q\le1$, and \eqref{eq:vertex2b} gives $F/4\le3h$.

\smallskip
\noindent\textit{The vertex $(2\alpha,2\alpha\beta/Q)$, when $Q>\alpha$.}
Here
\[
\frac F4=\frac{\alpha\beta}{Q^2}D.
\]
If $\beta\le Q$, the coefficient is at most $1$, and $F/4\le2h$. If $\beta>Q$, the identity
\[
D=QT+(Q-\alpha)(Q-\beta)\le Qh
\]
gives $F/4\le(\alpha\beta/Q)h\le h$.

\smallskip
\noindent\textit{The vertex $(0,2\beta)$, when $Q\le0$.}
Here
\[
\frac F4=\beta(\beta-P)+2\alpha\beta\gamma.
\]
The first term is at most $h$. If $P\ge0$, then $PQ\le0$ and $\alpha\beta\le D\le2h$. If $P<0$, then $T\ge\alpha+\beta\ge\alpha\beta$, so $\alpha\beta\le h$. In either case $F/4\le h+4h=5h$.

\smallskip
These are all the vertices: \eqref{eq:polygon} is a rectangle for $Q\le0$, a quadrilateral for $0<Q<\alpha$, and a triangle for $Q\ge\alpha$, with coincident vertices at equality. Hence $F\le20h$, proving \eqref{eq:twoball}. If $p_2\notin(1+\delta)B_1$, its left side is at least $(1+\delta)^2-1$, which proves \eqref{eq:separation}.
\end{proof}

\subsection{Proof of Theorem~\ref{thm:covering}}

\begin{proof}
Apply Lemma~\ref{lem:selection}. It remains to bound the multiplicity of the selected original balls. Fix $x\in\mathbb{R}^n$. Let $B_{i_1},\ldots,B_{i_M}$ be the selected balls containing $x$, in their inherited order. There is nothing to prove if $M\le1$. Translate the balls and the corresponding points from Lemma~\ref{lem:selection} by $-x$, and relabel them as $B_1,\ldots,B_M$ and $w_1,\ldots,w_M$, respectively, preserving the order.

The points $w_i$ are nonzero. Indeed, $w_1\notin B_2$ while $0\in B_2$, and, for $i\ge2$, $w_i\notin(1+\delta)B_1$ while $0\in(1+\delta)B_1$. Since $0,w_i\in B_i$, the ray from $0$ through $w_i$ meets $\partial B_i$ at a unique point $p_i$ beyond $w_i$.

If $i<j$ and $p_i\in B_j$, then the convexity of $B_j$, together with $0\in B_j$, would imply that the segment $[0,p_i]$, which contains $w_i$, lies in $B_j$. This contradicts $w_i\notin B_j$. Hence $p_i\notin B_j$. The same argument, using $0\in(1+\delta)B_i$, gives $p_j\notin(1+\delta)B_i$.

For $i<j$, we have $r_i\ge r_j$, so Lemma~\ref{lem:twoballs} yields
\[
|z(B_i,p_i)-z(B_j,p_j)|\ge\sqrt{\delta/10}.
\]
Thus the points $z(B_i,p_i)$, $1\le i\le M$, are pairwise separated in Euclidean distance by $\sqrt{\delta/10}$. The spherical caps of chordal radius $\tfrac13\sqrt{\delta/10}$ centered at these points are pairwise disjoint. Each has $(n-1)$-dimensional surface measure at least $c_n\delta^{(n-1)/2}$. Comparing with the surface measure of $S^{n-1}$ gives $M\le C_n\delta^{-(n-1)/2}$. The optimality assertion follows from Proposition~\ref{prop:sharpness} below.
\end{proof}

The lower-bound construction below is a quantitative variant of \cite[Example~5.1]{HL}.

\begin{proposition}\label{prop:sharpness}
For $0<\delta<1/2$, there are $N$ unit balls $B_i$ and distinct nonzero points $q_i$, $1\le i\le N$, such that
\begin{equation}\label{eq:sharp-config}
N\ge c_n\delta^{-\sigma},\qquad 0,q_i\in B_i,\qquad
q_j\notin(1+\delta)B_i\quad(i\ne j).
\end{equation}
Every subfamily satisfying \eqref{eq:covering} for this family therefore has multiplicity $N$ at the origin.
\end{proposition}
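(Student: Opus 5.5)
Let $u_1,\dots,u_N$ be a maximal $\eta$-separated set on $S^{n-1}$ with $\eta=C\sqrt\delta$, where $C$ is a large dimensional constant fixed below. Then $N\ge c_n\eta^{-(n-1)}$, and this is not yet the bound we want. So the points must instead be spread over a region that is thin in the normal direction. The slab/cap geometry suggests the following arrangement. Take unit balls $B_i=B(c_i,1)$ with $c_i=(1-\epsilon)u_i$ and $\epsilon$ small, so that $0\in B_i$. Take $q_i=(2-\epsilon-\tau)u_i$, a point near the far boundary of $B_i$ in direction $u_i$, with $\tau>0$ small; then $q_i\in B_i$. We need $|q_j-c_i|\ge1+\delta$ for $i\ne j$. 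Write $\gamma=u_i\cdot u_j=1-\theta^2/2$, where $\theta\approx|u_i-u_j|$, and set $R=2-\epsilon-\tau$, $a=1-\epsilon$. Then
\[
|q_j-c_i|^2=(R-a)^2+2Ra(1-\gamma)=(1-\tau)^2+Ra\theta^2\approx 1-2\tau+2\theta^2 .
\]
Hence it suffices that $2\theta^2-2\tau\ge 2\delta+\delta^2$, for instance $\theta^2\ge 2\delta$ with $\tau$ tiny. This again forces separation about $\sqrt\delta$ on the whole sphere, which would give $N\sim\delta^{-(n-1)/2}$. That is exactly $\delta^{-\sigma}$, since $\sigma=(n-1)/2$.

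So the plan is the direct one. Fix $\tau=\delta^2$, say, and $\epsilon=1/4$. Choose a maximal $2\sqrt\delta$-separated set of unit vectors, so that $\theta^2\ge|u_i-u_j|^2\ge4\delta$. Its cardinality is at least $c_n\delta^{-(n-1)/2}$ by the standard volume comparison of caps on $S^{n-1}$, as in the proof of Theorem~\ref{thm:covering}. The exact computation is
\[
|q_j-c_i|^2=(1-\tau)^2+Ra|u_i-u_j|^2 ,
\]
with $Ra=(1.75-\tau)(0.75)\ge1.2$. This gives
\[
|q_j-c_i|^2\ge 1-2\tau+4.8\delta\ge(1+\delta)^2
\]
for $\delta<1/2$ and $\tau=\delta^2$. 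The last step is a routine check: $4.8\delta-2\delta^2\ge2\delta+\delta^2$ reduces to $2.8\ge3\delta$, which holds. The points $q_i$ are nonzero and distinct because the $u_i$ are distinct and $R>0$. Membership $0\in B_i$ holds since $|c_i|=3/4<1$, and $q_i\in B_i$ holds since $|q_i-c_i|=1-\tau<1$.

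The final assertion is then immediate. Suppose a subfamily $\mathcal S$ satisfies \eqref{eq:covering} but omits some $B_j$. The point $q_j\in B_j$ must lie in $(1+\delta)B_i$ for some selected $i\ne j$, which contradicts $q_j\notin(1+\delta)B_i$. So every $B_j$ is selected, and all $N$ of them contain the origin. The only step needing care is the arithmetic choice of $\epsilon,\tau$ and the separation constant. If the constants above turned out too tight, I would simply take a larger separation multiple of $\sqrt\delta$, which affects $N$ only by a dimensional factor.
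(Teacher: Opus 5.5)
Your construction is correct and is essentially the paper's: unit balls centered at $a u_i$ with $a<1$, points $q_i=Ru_i$ just inside the far boundary, a maximal $C\sqrt\delta$-separated set of directions, and the identity $|Ru_j-au_i|^2=(R-a)^2+Ra|u_i-u_j|^2$. The paper takes $a=\tfrac12$, $R=\tfrac32-\delta$ and separation $4\sqrt\delta$, and handles $1/64\le\delta<1/2$ with a single ball; in your version there is one small arithmetic slip, since $Ra=\tfrac34(\tfrac74-\delta^2)$ is only guaranteed to exceed $\tfrac98$ (not $1.2$) for $\delta$ near $1/2$, but $1-2\delta^2+\tfrac92\delta\ge(1+\delta)^2$ still holds for all $\delta\le 5/6$, so nothing breaks.
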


\begin{proof}
For $0<\delta<1/64$, choose a maximal $4\sqrt\delta$-separated set $\{\omega_1,\ldots,\omega_N\}\subset S^{n-1}$. The caps of chordal radius $4\sqrt\delta$ cover $S^{n-1}$, so $N\ge c_n\delta^{-(n-1)/2}$. Set
\[
B_i=B\left(\tfrac12\omega_i,1\right),\qquad
q_i=\left(\tfrac32-\delta\right)\omega_i.
\]
All the balls contain the origin, and $q_i\in B_i$. For $j\ne i$,
\begin{align*}
\left|q_i-\tfrac12\omega_j\right|^2
&=(1-\delta)^2+\left(\tfrac34-\tfrac\delta2\right)|\omega_i-\omega_j|^2\\
&\ge1+10\delta-7\delta^2>(1+\delta)^2.
\end{align*}
 Each $B_i$ must be retained because $q_i$ is not covered by the enlargement of another ball. For $1/64\le\delta<1/2$, one ball containing $0$ and a nonzero point gives the same conclusion after decreasing $c_n$.
\end{proof}

\section{Solyanik estimates for the ball maximal operator}\label{sec:solyanik}

\subsection{Dilation of unions}

The following inequality also follows from \cite[Theorem~4.2]{Cs}, applied to the linear contraction of the centers described in the introduction; see also \cite{DA}. We include an elementary proof.

\begin{lemma}\label{lem:union-dilation}
Let $B_j=B(c_j,r_j)$, $1\le j\le N$, and let $a\ge1$. Then
\begin{equation}\label{eq:union-dilation}
\left|\bigcup_{j=1}^N aB_j\right|\le a^n\left|\bigcup_{j=1}^N B_j\right|.
\end{equation}
\end{lemma}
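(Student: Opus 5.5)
The plan is to prove that $s\mapsto s^{-n}\bigl|\bigcup_j sB_j\bigr|$ is nonincreasing on $s\ge1$. Comparing $s=1$ with $s=a$ then gives \eqref{eq:union-dilation}. Write $U_s=\bigcup_j B(c_j,sr_j)$ and $V(s)=|U_s|$. Equivalently, I will show $V'(s)\le (n/s)V(s)$ for almost every $s>0$, after checking that $V$ is locally Lipschitz, and then integrate $(\log V)'\le n/s$ from $1$ to $a$.

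\textbf{Step 1: the derivative of $V$.} Set $g(x)=\min_j |x-c_j|/r_j$, so that $U_s=\{g<s\}$. The function $g$ is Lipschitz with constant $\max_j r_j^{-1}$. On the set where the minimum is attained by the index $j$ and $x\ne c_j$, we have $|\nabla g|=1/r_j$. This holds off a null set, because ties occur on Apollonius spheres or hyperplanes. The coarea formula then gives $V(s)=\int_0^s\int_{\{g=t\}}|\nabla g|^{-1}\,d\mathcal H^{n-1}\,dt$. Hence, for almost every $s$,
\[
V'(s)=\sum_j r_j\,\mathcal H^{n-1}(\Sigma_j),
\]
where the sets $\Sigma_j$ partition $\partial U_s=\{g=s\}$ up to $\mathcal H^{n-1}$-null sets. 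Here $\Sigma_j$ is the part of $\partial U_s$ lying on the sphere $\partial B(c_j,sr_j)$ and attained by $j$. Alternatively, one can avoid coarea by estimating $V(s+\epsilon)-V(s)$ directly via the shell $\{s\le g<s+\epsilon\}$, cell by cell; I would fall back on this if the regularity bookkeeping gets awkward.

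\textbf{Step 2: disjoint cones.} For each $j$, let $K_j=\{c_j+t(x-c_j):x\in\Sigma_j,\ 0<t<1\}$ be the cone from $c_j$ over $\Sigma_j$. Then $K_j\subset B(c_j,sr_j)\subset U_s$, and since $\Sigma_j$ lies on a sphere of radius $sr_j$ centered at $c_j$, we have $|K_j|=\frac{sr_j}{n}\mathcal H^{n-1}(\Sigma_j)$. The key claim is that the cones $K_j$ are pairwise disjoint up to null sets. Suppose $y\in[c_j,x_j]\cap[c_k,x_k]$ with $x_j\in\Sigma_j$, $x_k\in\Sigma_k$, and put $R_i=sr_i$. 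Since $x_j\notin B(c_k,R_k)$ and $x_k\notin B(c_j,R_j)$, the triangle inequality gives
\[
R_j+R_k\le|x_j-c_k|+|x_k-c_j|\le |x_j-y|+|y-c_k|+|x_k-y|+|y-c_j|=R_j+R_k.
\]
So equality holds throughout, which forces collinearity of the relevant points. This degenerate configuration contributes only a null set, which should be checked carefully, for instance by showing it forces $x_j=x_k$ on the common boundary or $y$ onto a lower-dimensional set.

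\textbf{Step 3: conclusion.} Summing the cone volumes gives
\[
\frac sn V'(s)=\sum_j\frac{sr_j}{n}\mathcal H^{n-1}(\Sigma_j)=\sum_j|K_j|\le|U_s|=V(s),
\]
which is the desired differential inequality. I expect the main obstacle to be the measure-theoretic bookkeeping rather than the geometry. This includes justifying the a.e.\ derivative formula for $V$ (Lipschitz continuity of $V$ and the coarea identity with $|\nabla g|=1/r_j$ on each cell) and showing that the equality cases in Step 2 and the tie sets are genuinely negligible. The disjointness argument itself is the heart of the proof and is short.
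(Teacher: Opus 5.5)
\textbf{Verdict.} Your argument is correct. It is an infinitesimal version of what the paper does in one step, so the two routes genuinely differ.

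\textbf{The paper's route.} The paper fixes $a$ and keeps only the largest ball at each center. It partitions $\bigcup_j aB_j$ into power cells, assigning $x$ to an index minimizing $|x-c_j|^2-a^2r_j^2$. It then contracts the $j$th cell into $B_j$ by $T_j(x)=c_j+(x-c_j)/a$. Adding the two cell inequalities gives $(x-y)\cdot(c_i-c_j)\ge0$. This is incompatible with $T_i(x)=T_j(y)$, which means $x-y=-(a-1)(c_i-c_j)$. So the images are disjoint, and the Jacobian $a^{-n}$ finishes the proof.

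\textbf{Your route.} You differentiate in the scale and partition only the boundary $\partial U_s$, using the multiplicatively weighted function $g$. Your triangle-inequality argument for the disjointness of the cones over the pieces $\Sigma_j$ plays the role of the disjointness of the contracted cells.

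\textbf{What each buys.} The paper's route needs only a linear change of variables. It avoids the coarea formula, the absolute continuity of $V$, and any analysis of ties or tangencies. Yours gives, at every scale, the boundary estimate $\sum_j sr_j\,\mathcal H^{n-1}(\Sigma_j)\le nV(s)$. The monotonicity of $s^{-n}V(s)$ itself also follows from the lemma applied to the balls $sB_j$ with $a=s'/s$, so it is not extra.

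\textbf{The loose ends you flagged.} Both close.
\begin{itemize}
\item Discard coincident balls, so that the tie sets are null.
\item In the equality case of Step~2 with $c_j\ne c_k$, all the points lie on the line of centers. A common point of the two spheres on that line forces tangency, and this happens at $x_j=x_k$. Since $y\ne x_j$ lies on both $[c_j,x_j]$ and $[c_k,x_k]$, the tangency is internal. Hence the smaller sphere leaves the larger open ball only at the tangency point. Its $\Sigma$ is then at most a point, and the overlap of the cones lies in a segment.
\item If $c_j=c_k$, the smaller sphere lies inside the larger ball, so its $\Sigma$ is empty.
\end{itemize}
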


\begin{proof}
The case $a=1$ follows by partitioning the union among its balls. Suppose $a>1$, and first retain only the largest ball at each center. This changes neither union. Let $V=\bigcup_j aB_j$ and $T_j(x)=c_j+(x-c_j)/a$. For $x\in V$, choose an index minimizing
\[
|x-c_j|^2-a^2r_j^2,
\]
breaking ties by the smallest index, and let $V_j$ be the set of points assigned to $j$. These sets form a Borel partition of $V$. Since the minimum is negative on $V$, one has $V_j\subset aB_j$, and consequently $T_j(V_j)\subset B_j$.

For $x\in V_i$ and $y\in V_j$, the two minimizing inequalities give
\begin{equation}\label{eq:cell-monotonicity}
(x-y)\cdot(c_i-c_j)\ge0.
\end{equation}
If $T_i(x)=T_j(y)$, then $x-y=-(a-1)(c_i-c_j)$. For distinct centers this contradicts \eqref{eq:cell-monotonicity}. Thus the images are pairwise disjoint. Since each $T_j$ has Jacobian $a^{-n}$,
\[
a^{-n}|V|=\sum_j|T_j(V_j)|\le\left|\bigcup_jB_j\right|.
\]
\end{proof}

\subsection{Proof of Theorem~\ref{thm:solyanik}}

\begin{proof}
Fix a measurable set $E$ with $0<|E|<\infty$, and write $H_\alpha(E)=\{M\mathbf{1}_E>\alpha\}$. This is an open set, being the union of all balls satisfying $|B\cap E|>\alpha|B|$. A compact set $F\subset H_\alpha(E)$ is covered by finitely many such balls. Apply Theorem~\ref{thm:covering} with $0<\delta<1/2$ and write
\[
U=\bigcup_jB_j,\qquad V=\bigcup_j(1+\delta)B_j
\]
for the selected family. Then $F\subset V$ and
\begin{align*}
|U\setminus E|
&\le\sum_j|B_j\setminus E|\\
&\le\frac{1-\alpha}{\alpha}\sum_j|B_j\cap E|
\le A_n\delta^{-\sigma}\frac{1-\alpha}{\alpha}|E|.
\end{align*}
Lemma~\ref{lem:union-dilation} therefore gives
\[
|F|\le(1+\delta)^n\left(1+A_n\delta^{-\sigma}\frac{1-\alpha}{\alpha}\right)|E|.
\]
Taking the supremum over compact $F$ and then over $E$ gives
\begin{equation}\label{eq:tauberian-parameter}
\mathcal{C}_n(\alpha)\le(1+\delta)^n\left(1+A_n\delta^{-\sigma}\frac{1-\alpha}{\alpha}\right)
\qquad(0<\alpha<1,\ 0<\delta<1/2).
\end{equation}
For $\alpha\ge1/2$, this implies
\[
\mathcal{C}_n(\alpha)-1\le C_n\bigl(\delta+(1-\alpha)\delta^{-\sigma}\bigr).
\]
Choose $\delta=(1-\alpha)^\rho$ when $\alpha$ is close enough to $1$ that $\delta<1/2$. Since $\rho(\sigma+1)=1$, the upper bound in \eqref{eq:solyanik} follows.

For the lower bound, we use a localized and rescaled variant of the slab example in \cite[Example~1]{HP14}. Let
\[
E=(-3,3)^{n-1}\times(-2,0),\qquad
S_h=(-1,1)^{n-1}\times(0,h),
\]
where $0<h<1/4$. For $x'\in(-1,1)^{n-1}$, the ball $B((x',h-1),1)$ contains every point $(x',t)$ with $0<t<h$. Its part outside $E$ is its upper cap of height $h$. Writing $\kappa_k=|B_{\mathbb{R}^k}(0,1)|$, the cap volume satisfies
\begin{equation}\label{eq:cap-volume}
\kappa_{n-1}\int_0^h(2u-u^2)^{(n-1)/2}\,du
\le\frac{2^{(n+1)/2}\kappa_{n-1}}{n+1}h^{(n+1)/2}.
\end{equation}
Thus, for some $L_n>0$, the average of $\mathbf{1}_E$ on each of these balls is at least $1-L_nh^{(n+1)/2}$. Choose $h=c_n(1-\alpha)^\rho$ with $c_n$ small enough that this average is greater than $\alpha$. Then $S_h\subset H_\alpha(E)$. Also $E\subset H_\alpha(E)$, since $E$ is open. Therefore
\[
\mathcal{C}_n(\alpha)-1\ge\frac{|S_h|}{|E|}=\frac{h}{2\cdot3^{n-1}}
\ge c_n(1-\alpha)^\rho.
\]
\end{proof}

\subsection{The halo function}

Following \cite[Section~1]{HP15}, define $\phi_n(t)=t$ for $0\le t\le1$ and $\phi_n(t)=\mathcal{C}_n(1/t)$ for $t>1$.

\begin{corollary}\label{cor:halo}
The functions $\mathcal{C}_n$ and $\phi_n$ are locally H\"older continuous with exponent $\rho$ on $(0,1)$ and $(1,\infty)$, respectively. Moreover,
\begin{equation}\label{eq:halo-endpoint}
\phi_n(1+h)-1\asymp_n h^\rho\qquad(h\downarrow0).
\end{equation}
In particular, no exponent larger than $\rho$ gives a pointwise H\"older bound for $\phi_n$ at $1$.
\end{corollary}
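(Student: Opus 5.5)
The plan is to get local H\"older continuity of $\mathcal{C}_n$ on $(0,1)$ by feeding Theorem~\ref{thm:solyanik} into the halo-set embedding of Hagelstein and Parissis \cite[Theorem~4]{HP15}, which is the comparison quoted in the introduction. As I recall it, that embedding says that for $0<\alpha<\beta<1$ one has
\[
\{M\mathbf{1}_E>\alpha\}\subset\{M\mathbf{1}_{H_\beta(E)}>\alpha/\beta\},
\]
or a closely related inclusion. The reason is that if $|B\cap E|>\alpha|B|$, then either $B$ meets $H_\beta(E)$ in a large proportion, or a density/Lebesgue-point argument applies. Taking measures, this gives $\mathcal{C}_n(\alpha)\le\mathcal{C}_n(\alpha/\beta)\,\mathcal{C}_n(\beta)$, a submultiplicativity-type inequality. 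I will use it in the form
\[
\mathcal{C}_n(\alpha\beta)\le\mathcal{C}_n(\alpha)\,\mathcal{C}_n(\beta),
\]
which in terms of $\phi_n$ reads $\phi_n(st)\le\phi_n(s)\phi_n(t)$ for $s,t>1$.

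Given this, fix a compact interval $[t_0,t_1]\subset(1,\infty)$ and points $t_0\le t<t'=t(1+\eta)\le t_1$.
\begin{itemize}
\item Monotonicity of $\phi_n$ (clear, since $H_\alpha(E)$ decreases in $\alpha$) gives $\phi_n(t')\ge\phi_n(t)$.
\item Submultiplicativity gives
\[
\phi_n(t')-\phi_n(t)\le\phi_n(t)\bigl(\phi_n(1+\eta)-1\bigr).
\]
\item Theorem~\ref{thm:solyanik}, with $1-1/(1+\eta)\asymp\eta$, gives $\phi_n(1+\eta)-1\le C\eta^\rho$ for small $\eta$.
\item The factor $\phi_n(t)\le\phi_n(t_1)$ is finite by the weak $(1,1)$ bound, and $\eta\asymp|t'-t|$ on the compact interval.
\end{itemize}
For larger $\eta$ the H\"older bound is trivial by boundedness on the compact set. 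Transferring through $\alpha=1/t$, which is a bi-Lipschitz change of variables on compact subsets, yields the statement for $\mathcal{C}_n$ on $(0,1)$.

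The endpoint statement \eqref{eq:halo-endpoint} is immediate from Theorem~\ref{thm:solyanik}: set $\alpha=1/(1+h)$, so that $1-\alpha=h/(1+h)\asymp h$. The final sentence follows because a bound $\phi_n(1+h)-1\le Ch^{\gamma}$ with $\gamma>\rho$ would contradict the lower bound $c_nh^\rho$ as $h\downarrow0$.

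The main obstacle is stating correctly the submultiplicativity that \cite[Theorem~4]{HP15} actually supplies, including any normalizations or restrictions on the parameters. If the citation gives only an inclusion of halo sets, I would derive the inequality directly. Let $E'=H_\beta(E)$, so that $|E'|\le\mathcal{C}_n(\beta)|E|$. Any ball $B$ with $|B\cap E|>\alpha\beta|B|$ satisfies $|B\cap E'|\ge|B\cap E|$ up to null sets, since almost every point of $E$ lies in $E'$ by Lebesgue differentiation. This alone only gives the weaker inclusion $H_{\alpha\beta}(E)\subset H_{\alpha\beta}(E')$. So the genuinely needed step is the quantitative HP15 inequality, and I would rely on the citation there rather than reprove it.
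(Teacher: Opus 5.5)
Your architecture matches the paper's. A halo-set embedding becomes a multiplicative inequality for $\mathcal{C}_n$, which is then combined with monotonicity, boundedness on compact sets and Theorem~\ref{thm:solyanik}. The endpoint asymptotic and the final sentence are handled exactly as in the paper.

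The gap is the inequality you feed in. Theorem~4 of \cite{HP15} does not assert $H_\alpha(E)\subset H_{\alpha/\beta}(H_\beta(E))$, and for arbitrary $0<\alpha<\beta<1$ that inclusion is false. Let $E=B(0,1)\setminus\overline{B(0,\epsilon)}$ with $\epsilon^n=1-\beta$ and $\epsilon$ small. Since $|E|=\beta|B(0,1)|$, the origin lies in $H_\alpha(E)$ for every $\alpha<\beta$. Now compare $|B'\cap B(0,\epsilon)|+|B'\setminus B(0,1)|$ with $\epsilon^n|B'|$ according to the size of the ball $B'$:
\begin{itemize}
\item a ball containing $B(0,\epsilon)$ loses $|B(0,\epsilon)|$ plus its excess over $|B(0,1)|$;
\item any other ball of radius at most $3/5$ through a point near $0$ contains a large enough part of $B(0,\epsilon)$, nearly half of it once its radius is large compared with $\epsilon$;
\item the remaining balls protrude from $B(0,1)$ by a fixed amount.
\end{itemize}
Hence no ball meeting a small neighbourhood $N$ of $0$ has $|B'\cap E|>\beta|B'|$, so $N\cap H_\beta(E)=\varnothing$. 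Since $H_\beta(E)$ is bounded, every ball through $0$ misses $H_\beta(E)$ on a proportion at least some $c_0>0$ of its volume. Therefore $0\notin H_{\alpha/\beta}(H_\beta(E))$ whenever $1-c_0\le\alpha/\beta<1$. Your heuristic does not produce the factor $1/\beta$, as you observed yourself. So $\mathcal{C}_n(\alpha\beta)\le\mathcal{C}_n(\alpha)\mathcal{C}_n(\beta)$ is left unproved, and deferring to the citation does not help, because the citation says something different.

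What \cite{HP15} provides, and what the paper uses, is
\[
H_\alpha(E)\subset H_{\alpha(1+a_n\min\{\alpha,1-\alpha\}^{2n}\eta)}\bigl(H_{1-b_n\eta}(E)\bigr)\qquad(0<\eta\le d_n(1-\alpha)).
\]
Here the inner level is the one close to $1$, the gain in the outer level carries a constant that degenerates as $\alpha\to0$ or $\alpha\to1$, and $\eta$ is restricted. Taking measures (the inner halo set has finite measure by the weak type bound) gives $\mathcal{C}_n(\alpha)\le\mathcal{C}_n(\beta)\,\mathcal{C}_n(1-b_n\eta)$ with $\beta=\alpha(1+a_n\min\{\alpha,1-\alpha\}^{2n}\eta)$. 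For $\alpha<\beta$ in a compact interval $I\subset(0,1)$, solving for $\eta$ gives $\eta\asymp_{n,I}\beta-\alpha$. If $\beta-\alpha$ is small, then $\eta\le d_n(1-\alpha)$ and $1-b_n\eta>\alpha_n$, and Theorem~\ref{thm:solyanik} yields $0\le\mathcal{C}_n(\alpha)-\mathcal{C}_n(\beta)\le C_{n,I}(\beta-\alpha)^\rho$.

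With this replacement, your remaining steps (large increments via boundedness on $I$, transfer through $t=1/\alpha$) go through unchanged. The $\alpha$-dependence of the constants is also why only local H\"older continuity is obtained. Your clean submultiplicativity would have given a H\"older bound uniform up to $t=1$, which is more than the available embedding delivers.
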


\begin{proof}
Write $H_\alpha(E)=\{M\mathbf{1}_E>\alpha\}$. By \cite[Theorem~4]{HP15}, there are dimensional constants $a_n,b_n,d_n>0$ such that
\begin{equation}\label{eq:halo-embedding}
H_\alpha(E)\subset H_{\alpha(1+a_n\min\{\alpha,1-\alpha\}^{2n}\eta)}
\bigl(H_{1-b_n\eta}(E)\bigr)
\end{equation}
whenever $0<\alpha<1$ and $0<\eta\le d_n(1-\alpha)$.

Fix a compact interval $I\subset(0,1)$. Estimate \eqref{eq:tauberian-parameter}, with $\delta=1/4$, shows that $\mathcal{C}_n$ is bounded on $I$ and that every halo set occurring below has finite measure. Let $\alpha<\beta$ belong to $I$ and suppose that $\beta-\alpha$ is sufficiently small. Set
\[
\eta=\frac{\beta/\alpha-1}
{a_n\min\{\alpha,1-\alpha\}^{2n}}.
\]
Then $\eta\asymp_{n,I}\beta-\alpha$, and, if $\beta-\alpha$ is sufficiently small, \eqref{eq:halo-embedding} applies and $1-b_n\eta>\alpha_n$. Taking measures in \eqref{eq:halo-embedding} and then the supremum over $E$ gives
\[
\mathcal{C}_n(\alpha)\le \mathcal{C}_n(\beta)\mathcal{C}_n(1-b_n\eta).
\]
Since $\mathcal{C}_n$ is nonincreasing, Theorem~\ref{thm:solyanik} yields
\[
0\le \mathcal{C}_n(\alpha)-\mathcal{C}_n(\beta)
\le \mathcal{C}_n(\beta)\bigl(\mathcal{C}_n(1-b_n\eta)-1\bigr)
\le C_{n,I}(\beta-\alpha)^\rho.
\]
For larger values of $\beta-\alpha$, the same estimate follows from the boundedness of $\mathcal{C}_n$ on $I$. Hence $\mathcal{C}_n\in C^\rho_{\mathrm{loc}}((0,1))$. Since $t\mapsto1/t$ is Lipschitz on compact subintervals of $(1,\infty)$, it follows that $\phi_n\in C^\rho_{\mathrm{loc}}((1,\infty))$.

Finally, \eqref{eq:solyanik} gives
\[
\phi_n(1+h)-1
=\mathcal{C}_n\!\left(\frac1{1+h}\right)-1
\asymp_n\left(1-\frac1{1+h}\right)^\rho
\asymp_n h^\rho
\qquad(h\downarrow0).
\]
The lower bound excludes every larger pointwise H\"older exponent at $1$.
\end{proof}

\section{Modified maximal operators for Radon measures}\label{sec:radon}

\begin{proof}[Proof of Theorem~\ref{thm:radon}]
Put $t=\sqrt{k}$. If $\int_B|f|\,d\mu>\lambda\mu(kB)$, then $M_{t,\mu}g(y)\ge\mu(kB)^{-1}\int_{tB}g\,d\mu$ for $y\in B$, and hence
\[
\int_{tB}g\,d\mu\le\frac1\lambda\int_B|f|M_{t,\mu}g\,d\mu.
\]
The open set $\{M_{k,\mu}f>\lambda\}$ is the union of all such balls. Cover a compact subset by finitely many of them and apply Theorem~\ref{thm:covering} with $\delta=t-1$. Summing the last inequality and using the multiplicity bound, then taking the supremum over compact subsets, gives
\begin{equation}\label{eq:radon-distribution}
\int_{\{M_{k,\mu}f>\lambda\}}g\,d\mu
\le\frac{C_n(k-1)^{-\sigma}}{\lambda}\int |f|M_{t,\mu}g\,d\mu.
\end{equation}
Since $M_{k,\mu}(f\mathbf{1}_{\{|f|\le\lambda/2\}})\le\lambda/2$, apply \eqref{eq:radon-distribution} to $f\mathbf{1}_{\{|f|>\lambda/2\}}$ at level $\lambda/2$ and integrate against $p\lambda^{p-1}\,d\lambda$. This proves \eqref{eq:radon-fs}. Taking $g=1$, for which $M_{t,\mu}g\le1$, proves \eqref{eq:radon-weak} and the upper bound in \eqref{eq:radon-strong}.

The construction below combines the atomic example of \cite{GK} with the configurations of Proposition~\ref{prop:sharpness}, replacing the point masses by smooth bumps.

For sharpness, take the configurations $B_{j,i},q_{j,i}$ from the proof of Proposition~\ref{prop:sharpness} with $\delta_j=2^{-j}$, $j\ge7$, and $N_j\ge c_n\delta_j^{-\sigma}$. Put $q_{j,0}=0$ and $\eta_j=\delta_j/100$. The estimates there show that $B(0,\eta_j)$ and $B(q_{j,i},\eta_j)$ lie in $B_{j,i}$, while $B(q_{j,l},\eta_j)$ is disjoint from $(1+\delta_j)B_{j,i}$ for $l\ne0,i$.

Fix $0\le\psi\in C_c^\infty(B(0,1))$ with integral one, and set $b_j=2^{-j}\eta_j^n/(N_j+1)$. Choose $a_j\in\mathbb{R}^n$ tending to infinity so that the balls $B(a_j,2)$ are pairwise disjoint and $\int_{B(a_j,2)}e^{-|x|^2}\,dx\le b_j$. Define
\[
w_0(x)=e^{-|x|^2}+\sum_{j\ge7}b_j\eta_j^{-n}
\sum_{i=0}^{N_j}\psi\!\left(\frac{x-a_j-q_{j,i}}{\eta_j}\right).
\]
The sum is locally finite, has integral $\sum_j2^{-j}\eta_j^n$, and is bounded by $\|\psi\|_\infty\sum_j2^{-j}$. Thus $w_0$ is smooth, positive, bounded, and integrable. Normalize $d\mu_0=w_0\,dx$ to a probability measure $\mu$.

Given $0<k-1\le2^{-7}$, choose $j$ with $k-1\le\delta_j<2(k-1)$ and set $f_j=\mathbf{1}_{B(a_j,\eta_j)}$. Every $k(a_j+B_{j,i})$ lies in $B(a_j,2)$ and contains exactly the two bumps at $a_j$ and $a_j+q_{j,i}$. Its measure is at most $3b_j$, and its numerator is at least $b_j$. Consequently $M_{k,\mu_0}f_j\ge1/3$ on $\bigcup_{i=1}^{N_j}B(a_j+q_{j,i},\eta_j)$, so
\[
\mu_0\{M_{k,\mu_0}f_j>1/4\}\ge N_jb_j,
\qquad
\|f_j\|_{L^1(\mu_0)}=\|f_j\|_{L^p(\mu_0)}^p\le2b_j.
\]
These estimates prove sharpness in \eqref{eq:radon-weak} and the lower bound in \eqref{eq:radon-strong}; normalization does not change the weak $(1,1)$ or $L^p$ constants. They also prove sharpness in \eqref{eq:radon-fs} by taking $g=1$. The remaining range of $k$ follows from $M_{k,\mu}1=1$, since $\mu$ is finite.
\end{proof}

\end{document}